\documentclass{amsart}
\usepackage{amsmath,amssymb,enumerate,mathtools}
\newtheorem{theorem}{Theorem}[section]

\newtheorem{conjecture}[theorem]{Conjecture}
\theoremstyle{definition}

\newcommand{\bF}{\mathbb F}

\newcommand{\bC}{\mathbb C}
\newcommand{\bK}{\mathbb K}
\newcommand{\bZ}{\mathbb Z}

\newcommand{\cB}{\mathcal{B}}

\newcommand{\cM}{\mathcal{M}}

\newcommand{\cU}{\mathcal{U}}

\newcommand{\del}{\!\setminus\!}

\sloppy
\usepackage[utf8]{inputenc}

\title{Almost all matroids are non-representable}
\author{Peter Nelson}

\newcommand{\norm}[1]{\left\lVert #1 \right\rVert}
\newcommand{\ceiling}[1]{\left\lceil #1 \right\rceil}
\newcommand{\ceil}[1]{\ceiling{#1}}
\newcommand{\floor}[1]{\left\lfloor #1 \right\rfloor}
\newcommand{\bx}{\mathbf{x}}
\newcommand{\bu}{\mathbf{u}}

\newcommand{\mbG}{\mathbf{G}}
\begin{document}
\thanks{This research was partially supported by an NSERC Discovery Grant}
\sloppy
\maketitle
\begin{abstract}
\vspace{-0.0cm}We prove that, as $n$ tends to infinity, the proportion of $n$-element matroids that are representable tends to zero. 
\end{abstract}

\section{Introduction}

    A \emph{matroid} is a pair $M = (E,\cB)$, where $E$ is a finite `ground' set and $\cB$ is a nonempty collection of subsets of $E$ satisfying the \emph{exchange axiom}: for all $B,B' \in \cB$ and all $e \in B'\del B$, there exists $f \in B\del B'$ such that $(B \cup \{e\})\del \{f\} \in \cB$. The sets in $\cB$ are called the \emph{bases} of $M$. It follows from the definition that all bases have the same cardinality $r$; this is the \emph{rank} of $M$. Whitney introduced matroids in [\ref{whitney}] to study the abstract properties of linear dependence; his definition was motivated by the matroids that arise from the linear dependencies occuring among the columns of a matrix. For a field $\bF$, a matroid $M = (E,\cB)$ of rank $r$ is \emph{$\bF$-representable} if there is a matrix $A \in \bF^{[r] \times E}$ such that $\cB = \{B \in \binom{E}{r}\colon A[E] \text{ is nonsingular} \}$. A \emph{representable} matroid is one that is $\bF$-representable for at least one field $\bF$. These matroids form a class that is central to matroid theory, and (in various forms) the question of which matroids are representable has been omnipresent in the literature. See Oxley [\ref{oxley}] for an introduction to the subject.
    
    Despite their importance, little work has made progress on determining the asymptotic proportion of representable matroids among all matroids. The prevailing intuition has been that representable matroids should be extremely rare; this is supported by a result due to R\'onyai et al. [\ref{rbg}], which implies for each fixed field $\bF$ that almost all matroids are not $\bF$-representable, and one due to Alon [\ref{alon}] bounding the number of $\bC$-representable matroids. As observed by Mayhew et al. [\ref{mnww}], the lack of progress on this problem is in large part due to a lack of a good model for a random matroid. Many seemingly simple questions in asymptotic matroid theory remain open, and others have only succumbed recently to  rather sophisticated modern techniques [\ref{pvdp}]. 
    
    We prove the following theorem. 
    
    \begin{theorem}\label{main}
    For $n \ge 12$, there are at most $2^{n^3/4}$ representable matroids with ground set $[n]$. 
    \end{theorem}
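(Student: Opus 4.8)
The plan is to count representable matroids field-by-field and then assemble the pieces. First I would reduce to algebraically closed fields: if a matroid $M$ on $[n]$ is $\bF$-representable for some $\bF$ of characteristic $p$, then choosing a representing matrix and specializing its entries shows that $M$ is representable over the algebraic closure of the prime field of characteristic $p$ — since each basis-minor is a nonzero polynomial vanishing only on a proper subvariety and this closure is infinite, finitely many nonvanishing conditions can be met simultaneously. Thus it suffices to bound, for each characteristic, the number of matroids representable over a single algebraically closed field $\bK$. I would also invoke duality: $M$ is representable if and only if $M^\ast$ is, and $r(M^\ast) = n - r(M)$, so I may assume the rank $r$ satisfies $r \le \ceil{n/2}$.

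The counting engine is an incremental construction. Fixing $\bK$ and building a rank-$r$ simple representable matroid by placing points $v_1,\dots,v_n$ of $\PG(r-1,\bK)$ one at a time, the number of labelled matroids produced is at most $\prod_{i=1}^{n} c_i$, where $c_i$ bounds the number of distinct matroid types of a new point $v_i$ relative to the configuration $v_1,\dots,v_{i-1}$. Two positions of $v_i$ give the same type exactly when $v_i$ lies in the same flats, hence the same hyperplanes, of the matroid built so far; so $c_i$ is at most the number of distinct incidence patterns of a point against the at most $\binom{i-1}{r-1}$ hyperplanes present, which is the number of proper flats of the rank-$\le r$ matroid on those hyperplane-normals. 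This is at most $\sum_{j=0}^{r-1}\binom{\binom{i-1}{r-1}}{j} \le \binom{i-1}{r-1}^{\,r-1} \le 2^{(r-1)(i-1)}$. Multiplying over $i$ gives at most $2^{(r-1)\binom{n}{2}}$ representable rank-$r$ matroids over $\bK$. With $r \le \ceil{n/2}$ the exponent is at most $(\ceil{n/2}-1)\binom{n}{2} < n^3/4$, and the deficit here is of order $n^2$; for $n \ge 12$ this slack absorbs the factor $2$ from duality, the factor of at most $n$ from summing over ranks, and the routine passage from simple matroids to all matroids.

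The main obstacle is assembling the per-field bounds into a single bound over all fields, i.e.\ controlling the characteristics. The engine yields $2^{(r-1)\binom{n}{2}}$ for each algebraically closed $\bK$ separately, but a priori infinitely many characteristics could contribute genuinely new matroids, so I cannot simply sum. The crux is therefore to show that few characteristics matter: since the realization conditions of an $n$-element matroid are Grassmann--Plücker-type polynomial relations of degree at most $n$ with small integer coefficients, a matroid representable in a characteristic $p$ large compared to the complexity of these relations should also be representable in characteristic $0$, by a lifting argument. Turning this into a bound on the number of relevant primes that fits inside the order-$n^2$ slack is the delicate point, and the place where the constant $1/4$ must be defended; I expect this characteristic-counting step, rather than the geometric counting engine, to demand the most care.
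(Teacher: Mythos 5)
Your per-field counting engine is essentially sound: over a fixed field, the realized incidence patterns of a new point against the hyperplane-spans of the configuration built so far are exactly flats of the matroid on the normal vectors, the extension type is determined by this data (after patching the case where $v_1,\dotsc,v_{i-1}$ do not span, so that matroid hyperplanes are not ambient hyperplanes and one must add the type ``$v_i$ increases the rank''), and the product bound of roughly $2^{r\binom{n}{2}} = 2^{n^3/4-\Omega(n^2)}$ per algebraically closed field survives all of these repairs, since the slack is of order $n^2$ in the exponent. But the argument has a hole exactly where you flag it: you never prove the characteristic-counting step, and without it there is no theorem, since a priori the union over characteristics could be far larger than any single term. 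This is not a routine missing detail. Per-field bounds of this quality were already known --- the result of R\'onyai, Babai and Ganapathy [\ref{rbg}] gives them for every fixed field, and Alon [\ref{alon}] over $\bC$ --- so the entire content of the theorem lies in the step you leave open.

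Moreover, the route you sketch for that step does not work with known tools. You would need: every matroid on $[n]$ representable in some characteristic $p > C(n)$ is representable in characteristic $0$, with $C(n)$ small enough that the exceptional primes number at most $2^{cn^2}$. The standard transfer mechanism (arithmetic Nullstellensatz/elimination applied to the realization system, which here has $m \approx n^2/4$ variables and degree about $n/2$, with the nonvanishing conditions absorbed by a Rabinowitsch trick) controls the exceptional primes of a \emph{single} matroid only through the prime divisors of an integer whose bit-length is of order $(n/2)^{O(n^2)} = 2^{O(n^2\log n)}$ --- already exceeding your slack --- and, worse, you would then have to bound the union of these prime sets over all candidate matroids, which nothing supplies short of the theorem itself; von Staudt-type arithmetic gadgets, which force the characteristic of any representation to divide integers of doubly exponential size, show that ``few characteristics matter'' is a genuinely delicate claim rather than a formality. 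The paper avoids the decomposition by field altogether. Its Theorem~\ref{maintech} bounds the number of subsets of $[N]$ realizable as supports of $(f_1,\dotsc,f_N)$ over \emph{some} field by working in $\bZ[\bx]$: one forms the products $g_i = \prod_{j \in S_i} f_j$ and the subset-sums $g_I = \sum_{i\in I} g_i$, and too many patterns force two subset-sums to coincide as integer polynomials; since an identity in $\bZ[\bx]$ maps into every field, evaluating at the witness of a minimal pattern yields a contradiction no matter which field realizes which pattern. That one counting argument simultaneously replaces your geometric engine and the assembly step your proposal is missing.
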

       Knuth [\ref{knuth}] proved that the number of matroids with ground set $[n]$ is at least $2^{\frac{1}{n}\binom{n}{n/2} - n \log n}$; the theorem thus settles in the affirmative a conjecture, erroneously claimed by Brylawski and Kelly [\ref{bk}] and later explicitly posed by Mayhew et al. [\ref{mnww}], that almost all matroids are not representable. 
       
       Theorem~\ref{main} is proved by obtaining a more general bound on the number of `zero-patterns' of a system of integer polynomials that are allowed to be mapped to any field; the argument generalises the aforementioned result of R\'{o}nyai et al. [\ref{rbg}] that gives such a bound for each particular field. 
       
       We conjecture that, up to lower-order terms in the exponent, the bound in Theorem~\ref{main} is best-possible. We discuss this and make a more precise conjecture, that would characterise almost all representable matroids, in Section~\ref{conjecturesection}.  

\section{Zero-Patterns}

 Let $[n] = \{1, \dotsc, n\}$. For a polynomial $f \in \bZ[x_1, \dotsc, x_m]$, we write $\norm{f}$ for the maximum absolute value of a coefficient of $f$, where $\norm{0} = 0$. It is clear that $\norm{f+g} \le \norm{f} + \norm{g}$, and moreover, since each monomial of degree at most $d_1 + d_2$ can be written as the product of two monomials of degree at most $d_1,d_2$ respectively in at most $\binom{d_1+d_2}{d_1}$ different ways, we also have $\norm{fg} \le \binom{\deg(f) + \deg(g)}{\deg(f)}\norm{f}\norm{g}$. 
  
For each field $\bF$, let $\varphi_{\bF}\colon \bZ \to \bF$ be the natural homomorphism, and for variables $\bx =(x_1, \dotsc, x_m)$ let $\psi_{\bF}\colon \bZ[\bx] \to  \bF[\bx]$ be the ring homomorphism that applies $\varphi_{\bF}$ to each coefficient. Let $f_1, \dotsc, f_N$ be polynomials in $\bZ[\bx]$. We say a set $S \subseteq [N]$ is \emph{realisable} with respect to the tuple $(f_1,\dotsc,f_N)$ if there is a field $\bF$ and some $\bu \in \bF^m$ for which $S = \{i \in [N]\colon (\psi_{\bF}(f_i))(\bu) \ne 0_{\bF}\}$. In the theorem below and what follows, logarithms are base-two.

\begin{theorem}\label{maintech}
	Let $c,d \in \bZ$ and let $f_1, \dotsc, f_N$ be polynomials in $\bZ[x_1, \dotsc, x_m]$ with $\deg(f_i) \le d$ and $\norm{f_i} \le c$ for all $i$. If $k$ satisfies
	\[k > \tbinom{Nd+m}{m}(\log (3k) +  N \log (c(eN)^d)),\]
	then $(f_1, \dotsc, f_N)$ have at most $k$ realisable sets. 
\end{theorem}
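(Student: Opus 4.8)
The plan is to attach to each realisable set a single integer polynomial, to bound its coefficients, and then to run a lattice/covolume refinement of the R\'onyai--Babai--Ganapathy argument that charges each realisable set either to a rank increase or to a prime factor of a determinant. First I would discard every index $i$ with $f_i = 0$ in $\bZ[\bx]$, since such an $i$ lies in no realisable set; so assume each $f_i \ne 0$. For a realisable set $S$ put $g_S = \prod_{i \in S} f_i \in \bZ[\bx]$. Iterating the submultiplicativity bound $\norm{fg} \le \tbinom{\deg f + \deg g}{\deg f}\norm{f}\norm{g}$ recorded in the excerpt, with $\tbinom{jd}{d} \le (eN)^d$ for $j \le N$, yields $\norm{g_S} \le (c(eN)^d)^N =: C$, so $\log C = N\log(c(eN)^d)$ is precisely the second term in the hypothesis. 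Since $\deg g_S \le Nd$, each $g_S$ is an integer combination of the $r := \tbinom{Nd+m}{m}$ monomials of degree at most $Nd$; I therefore regard $g_S$ as a vector in $\bZ^r$ with $\norm{g_S}_\infty \le C$. A short witness argument shows distinct realisable sets yield distinct $g_S$, so it suffices to bound the number of these vectors.

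The engine is a strengthening of the R\'onyai et al. independence statement, made uniform across characteristics by a careful ordering. Fix a linear extension $\prec$ of reverse inclusion, so that $S' \prec S$ forces $S' \not\subsetneq S$. I claim that if $S$ is realisable over a field $\bF$ via a witness $\bu$, then $\psi_{\bF}(g_S)$ does not lie in the $\bF$-span of $\{\psi_{\bF}(g_{S'}) : S' \prec S\}$. Indeed, any such dependence, evaluated at $\bu$, would read $(\psi_{\bF}(g_S))(\bu) = \sum_{S' \prec S}\lambda_{S'}(\psi_{\bF}(g_{S'}))(\bu)$; but $(\psi_{\bF}(g_{S'}))(\bu) \ne 0$ only when $S' \subseteq S$, and the ordering rules out $S' \subsetneq S$ while $S'=S$ is excluded, so the right-hand side vanishes while the left-hand side does not. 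Crucially this holds against all earlier sets, whatever the characteristics in which they were realised.

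Now process the realisable sets in the order $\prec$, building the lattice $\Lambda_t \subseteq \bZ^r$ generated by the first $t$ of them, and let $q_S \in \{0\}\cup\{\text{primes}\}$ be the characteristic of a field witnessing $S$. By the claim, $g_S \bmod q_S$ is not in the span of $\Lambda_{t-1}\bmod q_S$. Either $g_S$ increases the $\mathbb{Q}$-rank, which happens at most $r$ times, or $g_S$ is $\mathbb{Q}$-dependent on its predecessors with finite order $e := [\Lambda_t:\Lambda_{t-1}]$ modulo $\Lambda_{t-1}$; in the latter case the non-membership modulo $q_S$ forces $q_S \mid e$ (otherwise a B\'ezout combination of $e\,g_S \in \Lambda_{t-1}$ and $q_S g_S$ would place $g_S$ in $\Lambda_{t-1}+q_S\bZ^r$), so $e \ge q_S \ge 2$. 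Let $I$ be the set of at most $r$ rank-increasing steps and $\Lambda_I$ the lattice they generate. By multiplicativity of covolume, $\prod_{t \notin I} e_t = \big(\prod_{S \in I}\dist(g_S,\cdot)\big)/\operatorname{covol}(\Lambda_{\mathrm{final}}) \le \prod_{S\in I}\norm{g_S}_2 \le (\sqrt{r}\,C)^{r}$ by Hadamard, using $\operatorname{covol}(\Lambda_{\mathrm{final}})\ge 1$. Hence the number of non-$I$ steps is at most $r\log(\sqrt{r}\,C) = r(\tfrac12\log r + \log C)$, and the total number of realisable sets is at most $r + r(\tfrac12\log r + \log C)$. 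The hypothesis is exactly strong enough to finish: $k > r(\log 3k + \log C)$ forces $k > r$, whence $3k > 3r \ge 2\sqrt{r}$, and this absorbs both the additive $r$ and the $\tfrac12\log r$ into the $\log(3k)$ term, giving at most $k$ realisable sets.

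The main obstacle is running the independence argument simultaneously over all characteristics: over a single field this is the R\'onyai et al. bound, but here a set realised only in characteristic $p$ must be shown ``new'' relative to sets realised in other characteristics entirely. The reverse-inclusion ordering is what makes the witness evaluation annihilate every earlier term irrespective of its characteristic, and the passage from ``new modulo $q_S$'' to ``$q_S$ divides a lattice index'' is what converts these contributions into prime factors of a single determinant whose size is governed by the coefficient bound $C$. Checking the covolume/Hadamard bookkeeping, and that the stated hypothesis on $k$ cleanly absorbs the polynomial-in-$r$ losses, is the remaining technical point.
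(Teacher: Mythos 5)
Your proposal is correct, and it takes a genuinely different route from the paper's proof, so a comparison is worthwhile. Both arguments rest on the same key mechanism, which you package as your span claim and the paper uses exactly once: if $(\bF,\bu)$ witnesses $S$, then $(\psi_{\bF}(g_{S'}))(\bu) \ne 0_{\bF}$ forces $S' \subseteq S$, so evaluation at $\bu$ annihilates $g_{S'}$ whenever $S' \not\subseteq S$. You deploy this as a full linear-independence statement (via a linear extension of reverse inclusion) and then convert it, characteristic by characteristic, into lattice data: at most $H = \tbinom{Nd+m}{m}$ steps can raise the rank over $\mathbb{Q}$, and every other realisable set strictly enlarges the current lattice, with the product of the indices controlled by Hadamard's inequality and the fact that a sublattice of $\bZ^H$ has covolume at least $1$. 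The paper instead avoids independence and lattices entirely: supposing the conclusion fails, it takes $k$ distinct realisable sets $S_1,\dotsc,S_k$, forms all $2^k$ subset-sums $g_I = \sum_{i \in I} g_i$ with $I \subseteq [k]$, and observes these are integer polynomials of degree at most $Nd$ with coefficients of absolute value at most $k(c(eN)^d)^N$, so fewer than $2^k$ of them can be distinct; two equal sums $g_I = g_{I'}$ with $I \cap I' = \varnothing$ then give a contradiction by evaluating at the witness of a minimum-cardinality $S_\ell$, $\ell \in I \cup I'$ --- minimality of $|S_\ell|$ playing exactly the role of your ordering. What each buys: the paper's pigeonhole is shorter and more elementary (no covolume, no Hadamard, no index-divisibility), while yours stays closer to the original R\'onyai--Babai--Ganapathy linear-algebra argument, extends it uniformly across characteristics, and yields an explicit, non-self-referential bound of $H\left(1 + \tfrac12 \log H + N\log(c(eN)^d)\right)$ on the number of realisable sets, from which the stated theorem follows by your closing arithmetic; it also gives structural information the paper's proof does not, namely that at most $H$ realisable sets admit characteristic-zero witnesses. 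Two minor points: your final arithmetic needs $\log C \ge 0$, i.e.\ $c \ge 1$, which is harmless (any nonzero $f_i$ has $\norm{f_i} \ge 1$, and if all $f_i = 0$ then only $\varnothing$ is realisable); and the B\'ezout step is needed only to conclude $e_t \ge 2$, which already follows from $g_S \notin \Lambda_{t-1} + q_S\bZ^H \supseteq \Lambda_{t-1}$, though the stronger divisibility $q_S \mid e_t$ you derive is correct.
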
 
\begin{proof}
	Suppose not; let $S_1, \dotsc, S_k$ be distinct realisable sets. Let $\bx = (x_1, \dotsc, x_m)$ and for each $i \in [k]$ let $g_i(\bx) = \prod_{j \in S_i} f_j(\bx)$. Now $\deg(g_i(\bx)) \le Nd$, and 
	\[\norm{g_i(\bx)} \le \prod_{j=1}^N\norm{f_j(\bx)}\binom{dj}{d} \le c^N \binom{dN}{d}^N \le (c(eN)^d)^N.\]
	Let $c' =  (c(eN)^d)^N$. For each $I \subseteq [k]$, let $g_I(\bx) = \sum_{i \in I}g_{i}(\bx)$. Clearly $\deg(g_I(\bx)) \le Nd$ and $\norm{g_I(\bx)} \le kc'$. The space of $m$-variate polynomials of degree at most $Nd$ has dimension $H = \binom{Nd + m}{m}$, so there are at most $(2kc'+1)^H \le (3kc')^H$ different possible $g_I$. Now $k > H \log (3kc')$ by assumption, so $2^k > (3kc')^H$; thus there exist distinct sets $I,I' \subseteq [k]$ so that $g_I(\bx) = g_{I'}(\bx)$. 

	We may assume that $I \cap I' = \varnothing$. Let $\ell \in I \cup I'$ be chosen so that $|S_\ell|$ is as small as possible; say $\ell \in I$. Let $\bF$ be a field and $\bu \in \bF^m$ be such that $S_\ell = \{i \in [N]\colon (\psi_\bF(f_i))(\bu) \ne 0_{\bF}\}$. The definition of $g_\ell$ implies that $(\psi_\bF(g_\ell))(\bu) = \prod_{i \in S_\ell}((\psi_\bF(f_i))(\bu)) \ne 0_{\bF}$. Let $t \in I \cup I' - \{\ell\}$. Since $|S_t| \ge |S_\ell|$ and $S_t \ne S_\ell$, there is some $j \in S_t - S_{\ell}$, and $(\psi_{\bF}(f_j))(\bu) = 0_{\bF}$ so $(\psi_{\bF}(g_t))(\bu) = \prod_{i \in S_t}(\psi_{\bF}(f_i(\bu))) = 0_{\bF}$. It follows that $(\psi_{\bF}(g_I))(\bu) = (\psi_{\bF}(g_{\ell}))(\bu) \ne 0_{\bF}$ and $(\psi_{\bF}(g_{I'}))(\bu) = 0_{\bF}$, contradicting $g_I(\bx) = g_{I'}(\bx)$. 
\end{proof}

\section{The Number of Representable Matroids}
We now bound the number of rank-$r$ representable matroids on $[n]$ for each $r$; this will imply the main theorem. \begin{theorem} 
	Let $n \ge 12$ and $0 \le r \le n$. The number of rank-$r$ representable matroids on $[n]$ is at most  $k(n,r) = 2^{r(n-r)(n - (3/2)\log n + 5)}$.
	\end{theorem}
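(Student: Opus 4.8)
The plan is to reduce the statement to an application of Theorem~\ref{maintech} by encoding rank-$r$ representable matroids on $[n]$ as realisable sets of a suitable polynomial system. A rank-$r$ matroid on $[n]$ is determined by its collection of bases, and each basis is an $r$-subset of $[n]$; so a matroid is encoded by the set $S \subseteq \binom{[n]}{r}$ of its bases. Writing $N = \binom{n}{r}$, I would index the basis-candidates by the $r$-subsets of $[n]$ and set up polynomials whose nonvanishing pattern records exactly which $r$-subsets are bases.

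Concretely, introduce an $r \times n$ matrix of indeterminates $x_{i,j}$, so $m = rn$ variables. For each $r$-subset $B \subseteq [n]$, let $f_B \in \bZ[\bx]$ be the determinant of the $r \times r$ submatrix of $(x_{i,j})$ consisting of the columns indexed by $B$. By the definition of $\bF$-representability, a rank-$r$ matroid $M$ on $[n]$ with basis-set $\cB$ is $\bF$-representable precisely when there is an assignment $\bu \in \bF^{rn}$ making $f_B(\bu) \ne 0$ exactly for $B \in \cB$; that is, $M$ is representable over some field if and only if $\cB$, viewed as a subset of $[N]$, is realisable with respect to the tuple $(f_B : B \in \binom{[n]}{r})$. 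Hence the number of rank-$r$ representable matroids on $[n]$ is at most the number of realisable sets of this system.

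It then remains to bound the parameters and apply Theorem~\ref{maintech}. Each $f_B$ is an $r \times r$ determinant in the $x_{i,j}$, so $d = \deg(f_B) \le r$ and, since each determinant is a signed sum of $r!$ monomials each with coefficient $\pm 1$, we have $\norm{f_B} \le 1$, i.e.\ $c = 1$. With $N = \binom{n}{r}$ and $m = rn$, Theorem~\ref{maintech} says that any $k$ exceeding
\[\tbinom{Nd+m}{m}\bigl(\log(3k) + N\log((eN)^d)\bigr)\]
bounds the number of realisable sets, hence the number of representable matroids. The remaining work is purely calculational: substitute $d = r$, $m = rn$, $N = \binom{n}{r}$, use $\binom{n}{r} \le (en/r)^r$ and standard estimates on the binomial $\binom{Nd+m}{m}$, and verify that the claimed value $k(n,r) = 2^{r(n-r)(n - (3/2)\log n + 5)}$ satisfies the hypothesis of Theorem~\ref{maintech} for all $n \ge 12$ and $0 \le r \le n$.

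The main obstacle is this final estimate. The binomial coefficient $\tbinom{Nd+m}{m}$ is doubly exponential in the relevant quantities, and the crux is showing its logarithm is comfortably dominated by the target exponent $r(n-r)(n - (3/2)\log n + 5)$; the $-(3/2)\log n$ correction term is delicate and is presumably exactly what forces the hypothesis $n \ge 12$. I would handle the boundary cases $r \in \{0,n\}$ (where there is a single matroid) separately, and for the generic range carefully balance the $\log N \approx r\log(n/r)$ contributions against the savings from $N = \binom{n}{r}$ being much smaller than $2^n$; getting the constant $5$ and the coefficient $3/2$ to come out requires tracking lower-order terms rather than crude bounds.
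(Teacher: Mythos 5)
Your encoding is sound as far as it goes---the basis set of a rank-$r$ representable matroid on $[n]$ is indeed realisable for the tuple of $r\times r$ minors of a full $r\times n$ matrix of indeterminates, so distinct representable matroids give distinct realisable sets---but the final ``purely calculational'' step is not merely delicate: it fails outright with your parameters. The bound in Theorem~\ref{maintech} is governed by $H=\binom{Nd+m}{m}$, whose logarithm is roughly $m\log(eNd/m)\approx mn$, so taking $m=rn$ instead of $m=r(n-r)$ produces a bound of roughly $2^{rn\cdot n}$ rather than $2^{r(n-r)n}$. Concretely, for $r=n/2$ you have $m=n^2/2$, $N=\binom{n}{n/2}$, $d=n/2$, and hence
\[ \log\tbinom{Nd+m}{m} \ \ge\ m\log\tfrac{Nd}{m} \ =\ \tfrac{n^2}{2}\left(\log\tbinom{n}{n/2}-\log n\right) \ \ge\ \tfrac{n^2}{2}\left(n-\tfrac{3}{2}\log n - \tfrac{1}{2}\right), \]
which already exceeds $\log k(n,n/2)=\tfrac{n^2}{4}\left(n-\tfrac{3}{2}\log n+5\right)$ for every $n\ge 12$ (the comparison reduces to $n-\tfrac32\log n>6$). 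Since the right-hand side of the hypothesis in Theorem~\ref{maintech} is at least $H$, that hypothesis is violated by $k=k(n,r)$, and no refinement of the lower-order estimates can recover the stated bound; your route proves only an upper bound of roughly $2^{n^3/2}$ at $r\approx n/2$ (enough to beat Knuth's lower bound, but not the theorem as stated).

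The missing idea is a normalisation that cuts the number of variables from $rn$ to $r(n-r)$: row-reduce the representation so that the columns of a fixed basis form an identity matrix. The paper takes $\bx$ to be an $r\times(n-r)$ matrix of indeterminates and uses the minors of $[\bx\,|\,I_r]$, which encode exactly those representable matroids in which $[n]\setminus[n-r]$ is a basis; every rank-$r$ representable matroid is a permutation of such a matroid, which costs only a factor $\binom{n}{r}\le 2^n$, absorbed by verifying the inequality for $k=\ceiling{2^{-n}k(n,r)}$ instead of $k(n,r)$. One also needs the duality reduction $k(n,r)=k(n,n-r)$ to assume $1<r\le n/2$; your setup has no such reduction, and it degrades further as $r\to n$, where $m=rn$ overshoots $r(n-r)$ by an unbounded factor. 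With these two ingredients the computation closes exactly as you outline.
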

\begin{proof}
	 The result is obvious for $r \le 1$; since $k(n,r) = k(n,n-r)$ we may assume by duality that $1 < r \le \tfrac{n}{2}$. Let $\bx = [x_{ij}]$ be an $r\times (n-r)$ matrix of indeterminates, and for each $B \in \binom{[n]}{r}$ let $p_B(\bx) \in \bZ[\bx]$ be the determinant of the $r \times r$ submatrix of $[\bx|I_r]$ with column set $B$. Clearly $\deg(p_B(\bx)) \le r$ and $\norm{p_B(\bx)} = 1$ for all $B$. 

	 Let $\cM$ be the class of rank-$r$ representable matroids on $[n]$ for which the set $[n]-[n-r]$ is a basis. For each $M \in \cM$ with set of bases $\cB \subseteq \binom{[n]}{r}$, the fact that $M$ has a representation of the form $[\bu|I_r]$ implies that $\cB$ is realisable for the polynomials $(p_B[\bx] \colon B \in \binom{[n]}{r})$.  Since every representable matroid has a basis that can be mapped by one of $\binom{n}{r}$ permutations to the set $[n]-[n-r]$, the number of rank-$r$ representable matroids on $[n]$ is at most $\binom{n}{r}|\cM| \le 2^n|\cM|$; it therefore suffices to show that $k = \ceiling{2^{-n}k(n,r)}$ satisfies the inequality in Theorem~\ref{maintech}, where $(m,c,d,N) = (r(n-r),1,r,\binom{n}{r})$. Indeed, we clearly have $\log 3k < 2^n$ so the right-hand term in the inequality is
	 \begin{align*}
	 	R &= \tbinom{Nd+m}{m}\left(\log 3k + N \log (c(eN)^d)\right) \\
		&< \binom{r\binom{n}{r} + r(n-r)}{r(n-r)}\left(2^n + r\tbinom{n}{r}(\log\tbinom{n}{r} + 2)\right)\\
		&< \binom{\tfrac{4}{e}r\tbinom{n}{r}}{r(n-r)}(2^n + n2^n(n+2))\\
		&< \left(\tfrac{4}{n-r}\tbinom{n}{r}\right)^{r(n-r)}2^n(n+1)^2\\
		&< \left(\tfrac{8}{n}\tbinom{n}{r}\right)^{r(n-r)}2^{r(n-r)}.
	\end{align*} 
	where we use $n \ge 12$ and $1 < r \le \tfrac{n}{2}$, which imply the inequalities $n -r< \tfrac{4-e}{e}\binom{n}{r}$ and $2^n(n+1)^2 < 2^{2n-4} \le 2^{r(n-r)}$. 
	 Now, since $\binom{n}{r} < \tfrac{1}{\sqrt{n}}2^n$, we have 
	 \[R < 2^{r(n-r)(n -(3/2)\log n + 4)} = 2^{-r(n-r)}k(n,r) <  2^{- n}k(n,r),\]
	 as required. 
\end{proof}

It is easily shown that $(n+1)k(n,r) \le (n+1)k(n,n/2) \le 2^{n^3/4}$ for all $n \ge 12$ and $0 \le r \le n$; summing over all $r$ gives Theorem~\ref{main}. (In fact, the theorem gives an asymptotically better bound of $2^{n^3/4 - (3/8)n^2\log n + O(n^2)}$.)
	
	Applying the same argument but bounding $\binom{n}{r}$ by $\left(\tfrac{en}{r}\right)^r$ instead, one can obtain an alternative upper bound of $k'(n,r) = 2^{nr^2 \log (ne/r)}$ for large $n$; for $r = o(n/\log n)$, this bound is better than $k(n,r)$, and improves one due to Alon [\ref{alon}] on the number of $\bC$-representable matroids of a given small rank.  

\section{Random Representable Matroids}\label{conjecturesection}

The bound in Theorem~\ref{main} is essentially the maximum over all $r \in \{0, \dotsc, n\}$ of the bound of around $2^{nr(n-r)}$ obtained in Theorem~\ref{maintech}. This maximum is acheived when $r \approx n/2$; indeed, it seems very likely that almost all representable matroids on $[n]$ have rank $r \in \{\floor{\tfrac{n}{2}},\ceiling{\tfrac{n}{2}}\}$. For $n \in \bZ$, let $\delta(n) \in \{0,1\}$ be the remainder of $n$ on division by $2$. A \emph{nonbasis} of a rank-$r$ matroid on $[n]$ is a set in $\binom{[n]}{r}$ that is not a basis. For a class $\cM$ of matroids, a property holds for \emph{asymptotically almost all matroids in $\cM$} if the proportion of matroids on $[n]$ in $\cM$ with the property tends to $1$ as $n \to \infty$.

For each integer $n$,  let $d(n) = \left(\floor{\tfrac{n}{2}}-1\right)\left(\ceil{\tfrac{n}{2}}-1\right) = \tfrac{1}{4}(n^2-\delta(n)) - n + 1$. Let $\bK$ be an algebraically closed field and let $r \in  \{\floor{\tfrac{n}{2}},\ceiling{\tfrac{n}{2}}\}$. Modulo row operations and column scaling, a $\bK$-representation of a rank-$r$  matroid on $[n]$ is precisely an element of the Grassmannian $\mbG$ of all $(r-1)$-dimensional subspaces of the projective space $P(\bK^{[n]})$ modulo column scaling, and we have $\dim(\mbG) = (r-1)(n-r-1) = d(n)$. Let $\cU \subseteq \binom{[n]}{r}$ be chosen uniformly at random so that $|\cU| = d(n)-1$. We argue speculatively that $\cU$ should with high probability be the set of nonbases of a $\bK$-representable matroid. Indeed, since $|\cU| < d(n)$, the variety $A \subseteq \mbG$ of representations in which each $U \in \cU$ is a nonbasis has algebraic dimension at least $d(n) - |\cU| = 1$, and one would expect that for nearly all $\cU$, a `generic' point in $A$ does not force a point in $\binom{[n]}{r} - \cU$ to be a nonbasis, and thus should give a $\bK$-representation of a matroid whose nonbases are precisely the sets in $\cU$. Less formally, $d(n)-1$ is the maximum number of nonbases a matroid on $[n]$ can have before a $\bK$-representation requires a `coincidence'. For odd $n$ there are two possible ranks, and for even $n$ there is one; this suggests the following conjecture.  

       \begin{conjecture}	
		Asymptotically almost all representable matroids on $[n]$ have rank in $\{\floor{\tfrac{n}{2}},\ceil{\tfrac{n}{2}}\}$ and have exactly $d(n) - 1$ nonbases, and asymptotically almost all matroids on $[n]$ with rank in $\{\floor{\tfrac{n}{2}},\ceil{\tfrac{n}{2}}\}$ and with exactly $d(n)-1$ nonbases are representable. Furthermore, the number of representable matroids on $[n]$ is \[(1+\delta(n) + o(1))\binom{\tbinom{n}{\floor{n/2}}}{d(n)-1}.\] 
       \end{conjecture}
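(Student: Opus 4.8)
The plan is to establish matching upper and lower bounds on the number $R(n)$ of representable matroids on $[n]$, each equal to $(1+\delta(n)+o(1))\binom{\binom{n}{\floor{n/2}}}{d(n)-1}$. The first step is to reduce both directions to a count over nonbasis-sets: a matroid is determined by its rank $r$ together with its set $\cU \subseteq \binom{[n]}{r}$ of nonbases, so $R(n)$ is the number of pairs $(r,\cU)$ for which $\cU$ is the exact nonbasis-set of a rank-$r$ representable matroid. The conjecture asserts that this count is dominated by the terms $r \in \{\floor{n/2},\ceil{n/2}\}$, and, within those, by $|\cU| = d(n)-1$; for such $r$ one has $\binom{n}{r} = \binom{n}{\floor{n/2}}$, and I will write $M = \binom{n}{\floor{n/2}}$ throughout.

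For the lower bound I would fix $r \in \{\floor{n/2},\ceil{n/2}\}$ and show that a $(1-o(1))$-fraction of the $\binom{M}{d(n)-1}$ subsets $\cU$ of size $d(n)-1$ are realisable as the exact nonbasis-set of a $\bK$-representation. Working in the Grassmannian $\mbG$, with $\dim\mbG = d(n)$, for each $U \in \binom{[n]}{r}$ let $H_U \subseteq \mbG$ be the Plücker hypersurface on which $U$ is a nonbasis. Given $\cU$, the variety $A_{\cU} = \bigcap_{U \in \cU} H_U$ satisfies $\dim A_{\cU} \ge d(n) - |\cU| = 1$; since every point of $\mbG$ yields a representable matroid, it suffices to find a point of $A_{\cU}$ lying on no further $H_{U'}$. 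The only obstruction is that $A_{\cU}$ might be contained in some extra hypersurface $H_{U'}$ with $U' \notin \cU$ — exactly the case where the chosen nonbases force another. Calling such $\cU$ \emph{bad}, the entire lower bound rests on showing that the number of bad $\cU$ is $o\!\left(\binom{M}{d(n)-1}\right)$. Across the (at most two) central ranks the resulting matroids are automatically distinct, which produces the factor $1+\delta(n)$.

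For the upper bound I would split the count by rank and by the number of nonbases $j = |\cU|$. Ranks $r \notin \{\floor{n/2},\ceil{n/2}\}$ should be absorbed by a parity-refined sharpening of the counting theorem of Section~3, showing their total contribution is $o\!\left(\binom{M}{d(n)-1}\right)$. For the central ranks, the terms with $j \le d(n)-2$ are controlled by the trivial bound $\binom{M}{j}$ and the ratio $\binom{M}{d(n)-1}/\binom{M}{d(n)-2} = (M-d(n)+2)/(d(n)-1) \to \infty$, hence are negligible; the term $j = d(n)-1$ contributes at most $\binom{M}{d(n)-1}$ per rank; and the terms $j \ge d(n)$ are the dangerous ones, since the trivial bound $\binom{M}{j}$ now vastly exceeds the target. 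Here representability must be exploited: with $j \ge d(n)$ nonbases the representation variety $A_{\cU}$ has expected dimension at most $0$, so realisable $\cU$ of this size are highly constrained, and a genuinely algebraic-geometric count is needed to beat the binomial.

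The main obstacle is twofold, and I expect it lies beyond the zero-pattern method alone. First, the demanded precision is extreme: the conjecture asks for a multiplicative $(1+o(1))$ factor, whereas the refined form of Theorem~\ref{main} quoted above, $2^{n^3/4 - (3/8)n^2\log n + O(n^2)}$, matches $\binom{M}{d(n)-1}$ only up to a factor of $2^{O(n^2)}$, which is nowhere near $1+o(1)$. Second, both the lower-bound genericity claim (bounding the bad $\cU$) and the upper-bound control of the $j \ge d(n)$ regime require understanding matroid realisation spaces inside $\mbG$, which by Mnëv-type universality can be arbitrarily complicated; ruling out forced coincidences for all but a $o(1)$-fraction of $\cU$ is precisely the sort of statement that such universality makes delicate. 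My expectation is that a proof will need new input on the \emph{typical}, rather than worst-case, geometry of these Plücker hypersurface arrangements.
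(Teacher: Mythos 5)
The statement you were asked to prove is a \emph{conjecture}: the paper does not prove it, and offers only a heuristic argument in its favour. Your proposal is likewise not a proof --- it is a plan whose two load-bearing steps are explicitly left open --- so the verdict is that there is a genuine gap, indeed exactly the gap that makes this an open problem. Concretely: for the lower bound, everything rests on showing that the number of \emph{bad} sets $\cU$ (those for which $A_{\cU} = \bigcap_{U \in \cU} H_U$ is contained in some hypersurface $H_{U'}$ with $U' \notin \cU$) is $o\bigl(\binom{M}{d(n)-1}\bigr)$, in your notation $M = \binom{n}{\floor{n/2}}$; you state this reduction but give no argument, and Mn\"ev-type universality (which you rightly cite) is precisely why no soft genericity argument is known to deliver it. For the upper bound, the regime $|\cU| \ge d(n)$ at the central ranks is untouched: the zero-pattern method of Sections 2--3 counts realisable patterns far too coarsely to help there, and nothing replaces it. (Your accounting of the discrepancy is in fact optimistic: the refined bound $2^{n^3/4 - (3/8)n^2\log n + O(n^2)}$ exceeds the conjectured count $2^{n^3/4 - (5/8)n^2\log n - O(n^2)}$ by a factor of $2^{\Theta(n^2 \log n)}$, not merely $2^{O(n^2)}$.)

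For what it is worth, your heuristic is the paper's own: Section~\ref{conjecturesection} presents precisely the dimension count $\dim A_{\cU} \ge d(n) - |\cU| = 1$ inside the Grassmannian $\mbG$, the expectation that a generic point of $A_{\cU}$ forces no additional nonbasis, and the factor $1+\delta(n)$ arising from the two central ranks when $n$ is odd --- all of it explicitly flagged by the author as speculative (``we argue speculatively\ldots''). So your write-up is a faithful, and somewhat more structured, elaboration of the paper's motivation for the conjecture: it correctly identifies where the difficulty lies, but neither you nor the paper proves any part of the statement.
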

       
       Less precisely (using the estimates $\log \binom{n}{\floor{n/2}} = n - \tfrac{1}{2}\log n+ O(1)$ and $\log \binom{m}{r} = r (\log(m/r) + O_m(1))$),  this value is $2^{n^3/4 - (5/8)n^2\log n - O(n^2)}$, which matches the upper bound in Theorem~\ref{main} up to lower-order terms in the exponent. 
\section{Acknowledgements}

I thank Jason Bell, Jim Geelen, Steven Karp, and Rudi Pendavingh for helpful discussions, as well as the referee for their useful suggestions. 

\section*{References}

\newcounter{refs}

\begin{list}{[\arabic{refs}]}
{\usecounter{refs}\setlength{\leftmargin}{10mm}\setlength{\itemsep}{0mm}}
\item\label{alon}
N. Alon,
The number of polytopes, configurations and real matroids,
Mathematika 33 (1986), 62--71.

\item\label{bk}
T. Brylawski and D. Kelly,
Matroids and combinatorial geometries,
 University of North Carolina Department of Mathematics, Chapel Hill, N.C. (1980). Carolina Lecture Series.

\item\label{knuth}
D. Knuth, 
The asymptotic number of geometries, 
J. Combin. Theory. Ser. A 16 (1974), 398--400.

\item \label{mnww}
D. Mayhew, M. Newman, D. Welsh, and G. Whittle,
On the asymptotic proportion of connected matroids,
European J. Combin. 32 (2011), 882--890.

\item\label{oxley}
J. Oxley, 
Matroid Theory (second edition), 
Oxford University Press, 2011. 

\item\label{pvdp}
R. Pendavingh, J. van der Pol,
On the number of bases of almost all matroids,  
arXiv:1602.04763v3 [math.CO].

\item\label{rbg}
L. R\'onyai, L. Babai and M. K. Ganapathy,
On the number of zero-patterns of a sequence of polynomials,
J. Amer. Math. Soc. 14 (2001), 717--735.

\item\label{whitney}
H. Whitney, 
On the abstract properties of linear dependence,
Amer. J. Math. 57 (1935), 509--533. 

\end{list}

\end{document}